\tikzstyle arrowstyle=[scale=1]
\tikzstyle directed=[postaction={decorate,decoration={markings,
    mark=at position .65 with {\arrow[arrowstyle]{stealth}}}}]
\tikzstyle reverse directed=[postaction={decorate,decoration={markings,
    mark=at position .65 with {\arrowreversed[arrowstyle]{stealth};}}}]
\providecommand{\U}[1]{\protect\rule{.1in}{.1in}}
\newtheorem{theorem}{Theorem}
\newtheorem{cor}[theorem]{Corollary}
\newtheorem{definition}[theorem]{Definition}
\newtheorem{rem}[theorem]{Remark}
\newcommand {\R} {\mathbb{R}}
\newcommand{\slt}{\mathfrak{sl}(2,\R)}
\begin{document}
  \begin{center}
        {\fontsize{18}{22}\selectfont
       \bf On the non-existence of hyperbolic polygonal relative equilibria for the negative curved 
       $n$--body problem with equal masses}
       \end{center}

\vspace{4mm}

        \begin{center}
        {\bf Ernesto P\'erez-Chavela$^1$, and Juan Manuel S\'anchez-Cerritos$^2$}\\
\bigskip
$^1$Departamento de Matem\'aticas\\
Instituto Tecnol\'ogico Aut\'onomo de M\'exico, Mexico City, Mexico\\
\bigskip
$^2$Departamento de Matem\'aticas\\
Universidad Aut\'onoma Metropolitana - Iztapalapa, Mexico City, Mexico\\
\bigskip
ernesto.perez@itam.mx, jmsc@xanum.uam.mx 
       \end{center}

        
        \begin{center}
        \today
        \end{center}

        
\abstract{We consider the $n$--body problem defined on surfaces of constant negative curvature. For the case of $n$--equal masses we prove that the hyperbolic relative equilibria with a regular polygonal shape do not exist. In particular the Lagrangian (three equal distances) hyperbolic relative equilibria do not exist. We also show the existence of a new class of hyperbolic collinear relative equilibria for the five body problem on surfaces of constant negative curvature.}
        
\noindent\textbf{Keywords:} celestial mechanics; curved n-body problem; relative equilibria.

\section{Introduction}

We consider the gravitational $n$--body problem in the two-dimensional hyperbolic space. We use the formulation of the problem proposed by Diacu, P\'erez-Chavela and Santoprete which can be found in \cite{Diacu}. For an interesting historical background about this problem you can see \cite{Diacu4}.

Relative equilibria are particular solutions of the equations of motion where the distances among the particles remain constant along the motion. These solutions have been widely studied in the classical problem, and recently there have appeared some results for the curved case. For the case of positive curvature, the classical Newtonian Eulerian and Lagrangian relative equilibria has been generalized  in several papers, see for instance 
 \cite{Diacu,Zhu}. Some results about the stability of these solutions can be found in \cite{Martinez}, where we can see some differences with respect to the Newtonian case. An interesting point to emphasize is that in the positively-curved case the linear stability depends on the angular momentum.
Some authors have also studied the polygonal relative equilibria in the positive curved space \cite{Diacu5,Tibboel}.
We can see that as in the classical Newtonian $n$--body problem (zero curvature), $n$--equal masses located at the vertices of a regular $n$--gon generate a relative equilibria by taking the action of the $SO(2)$--group. In particular for $n=3$ it has been shown in \cite{Diacu}, that  three masses form a Lagrangian  relative equilibrium (three equal distances) iff they are equal.

When the curvature is negative, for case of the collinear relative equilibria, we have the same result for symmetric configurations 
(two equal masses at the ends with one arbitrary mass at the middle \cite{juan}). 

We can mention more articles for relative equilibria in the positive curvature space; however very few   of these trajectories for the negative curvature case have been studied. One of them is the paper written by Tibboel \cite{Tibboel}, where he shows the non existence of elliptic homographic orbits for irregular polygons with coordinate $z$ not constant (he considers positive and negative curvature).
For case of the collinear relative equilibria,the linear stability also depends on the angular momentum, in particular for symmetric configurations, two equal masses at the ends with one arbitrary mass at the middle, an extensive analysis of the linear stability can be found in \cite{juan}).

We know that, up to isometries, there are three different groups of isometry in surfaces of constant negative curvature, therefore in principle we can have three different classes of relative equilibria called elliptic, hyperbolic or parabolic (ahead in this paper we will go deeper in this point), however it is known that the parabolic relative equilibria do  not exists \cite{Diacu}. The elliptic relative equilibria are the solutions which are invariant  
under the action of the $SO(2)$--group. For the $n$--body problem with equal masses it has been proved that if the masses are located at the vertices of a regular $n$--gon, then they generate an relative equilibrium \cite{Diacu}. In this paper we tackle the same question for the hyperbolic relative equilibria: Are there hyperbolic relative equilibria with an initial configuration of regular $n$-gon?
For our surprise the answer is negative, in particular we show the non-existence of hyperbolic Lagrangian solutions. In section 3, after the preliminaries given in section 2, we prove our main result about the non-existence of relative equilibria with polygonal regular $n$-gon shape for the $n$--body problem with equal masses on spaces of negative curvature. In section 4 we introduce the concept of collinear relative equilibria for the $n$ body problem, that is relative equilibria where the masses are on the same geodesic. We prove the existence of new families of collinear hyperbolic relative equilibria for the $5$--body problem on spaces of negative curvature.

\section{Relative equilibria on surfaces of negative curvature}\label{RE}
Without loss of generality we consider surfaces of constant negative curvature $-1$, which in general is a complete, simply connected two dimensional space $\mathcal{H}^2$. There are several models from the hyperbolic geometry (all of them isometric) to represent $\mathcal{H}^2$. In this paper we will use two of them the Weierstrass model denoted by $\mathbb{L}^2$  and the Poincar\'e upper half plane model 
$\mathbb{H}^2$.

\subsection{The Weierstrass model}
The Weierstrass model also known as the pseudo sphere $\mathbb{L}^2$ is given by the upper sheet  of the hyperboloid
\[\mathbb{L}^2 =\{(x,y,z) \in \mathbb{R}^{2,1} : x^2+y^2-z^2=-1\}, \]
where $\mathbb{R}^{2,1}$ is the Minkowski space, that is $\mathbb{R}^3$ endowed with the Lorentz inner product denoted by $\odot$, given $a=(a_x,a_y,a_z)$ and $b=(b_x,b_y,b_z)$, we have 
$a \odot b = a_xb_x + a_yb_y - a_zb_z)$. Let us denote by $q_i$ the position of the particle with mass $m_i$. The distance between any two points in this space is $d(q_i,q_j)=\cosh^{-1}(-q_i\odot q_j)$.
The potential is given by
\[ U(q)=\sum_{i<j} \cot(d(q_i,q_j)). \]

And the kinetic energy is defined by

\[ T=\dfrac{1}{2}\sum_{i}m_i\dot{q}_i \odot \dot{q}_i. \]

From the Euler-Lagrange equations, the equations of motion take the form

\begin{equation} \ddot{q}_i=\sum_{i \neq j}  \dfrac{m_j(q_j + (q_i \odot q_j) q_i)}{(-1+(q_i \odot q_j)^2)^{3/2}}+(\dot{q}_i \odot \dot{q}_i)q_i, \ \ i=1,\cdots, n. \label{eq:motionL2}
 \end{equation}

Let $Lor(\mathbb{L}^2, \odot)$ be the group of all orthogonal transformations of determinant 1 that maintains the upper part of the hyperboloid invariant (the group of isometries of $\mathbb{L}^2$), see
\cite{Diacu,Guadalupe} for more details. Applying the Principal Axis Theorem \cite{Nomizu}, which states that any  
 $1$--parameter subgroup  of $Lor(\mathbb{L}^2, \odot)$ can be written, in a proper basis, as

\[A=P  \left( \begin{array}{ccc}
\cos \theta & -\sin \theta & 0 \\
\sin \theta & \cos \theta & 0 \\
0 & 0 & 1 \end{array} \right) P^{-1},\]

\[B=P  \left( \begin{array}{ccc}
1 & 0 & 0 \\
0 & \cosh s & \sinh s \\
0 & \sinh s & \cosh s \end{array} \right) P^{-1},\]
or

\[C= P  \left( \begin{array}{ccc}
1 & -t & t \\
t & 1-t^2/2 & t^2/2 \\
t & -t^2/2 & 1+t^2/2 \end{array} \right) P^{-1},\]
where $\theta \in [0,2 \pi),s,t \in \mathbb{R}$, and $P \in Lor(\mathbb{L}^2, \odot)$, we obtain that 
any isometry of $Lor(\mathbb{L}^2, \odot)$ can be written as a composition of some of the above  three transformations. These transformations are called elliptic, hyperbolic and parabolic transformations respectively. The relative equilibria on $\mathbb{L}^2$ are the solutions of (\ref{eq:motionL2}) which are invariant under some isometry of $Lor(\mathbb{L}^2, \odot)$.

\subsection{The Poincar\'e upper half plane model}.
The Poincar\'e upper half plan model is given by $\mathbb{H}^2=\{w \in \mathbb{C} : Im(w)>0\}$ with the Riemannian metric

\[ -ds^2=\dfrac{4}{(w-\bar{w})^2}dwd\bar{w} \].

The potential is given by

\[ U(w,\bar{w})=\sum_{i<j}m_km_j\dfrac{(\bar{w}_k-w_k)(\bar{w}_i-w_i)-2(|w_k|^2+|w_j|^2)}{T_{k,j}}, \]
where

\begin{equation*}
\begin{split}
 T_{k,j}=&\left(4(Re(w_k)-Re(w_j))^2\left[\left(Re(w_k)-Re(w_j)\right)^2+2(Im(w_k)^2+Im(w_j)^2)\right]\right.+\\
 &\left. 4(Im(w_k)^2-Im(w_j)^2)^2 \right)^{1/2},
\end{split}
\end{equation*}

and the kinetic energy is

\[  T=\sum_{i}\dfrac{2m_k|\dot{w}_k|^2}{(w-\bar{w})^2} .  \]

The equations of motion take the form
\begin{equation}\label{eq:motionH2}
 \ddot{w}_k=-\dfrac{(w_k-\bar{w}_k)^2}{2}\sum_{i\neq j}\dfrac{m_j(\bar{w}_k-w_k)(\bar{w}_j-w_j)^2(w_k-w_j)(\bar{w}_j-w_k)}{T_{j,k}^3}+\dfrac{2\dot{w}_k^2}{w_k-\bar{w}_k} \end{equation}

Let be
\[ {\rm SL}(2,\mathbb{R}) = \{ g \in {\rm GL}(2,\mathbb{R}) \, | \, \det g=1 \},  \]
where ${\rm GL}(2,\mathbb{R})$ the group of invertible $2 \times 2$ real matrices.

The group $ {\rm SL}(2,\mathbb{R}) $ defines an action $\Psi$ on $\mathbb{H}^2$ as follows. 
Given a matrix $g \in {\rm SL}(2,\mathbb{R})$, 
\begin{equation*}
g = \left ( \begin{array}{cc} a & b \\ c & d \end{array} \right ),
\end{equation*}
and an element $x+iy:=(x,y)\in \mathbb{H}^2$,  we define $\Psi(g,(x,y))$ via  the  Moebius transformation
\begin{equation*}
\Psi(g,(x+iy))= \frac{a(x+iy) + b}{c(x+iy) +d}.
\end{equation*}

It is well known  that  for any $g\in{\rm SL}(2,\mathbb{R}) $,  the map $\Psi(g,\cdot)$ is a Riemannian isometry of 
$\mathbb{H}^2$ equipped with the hyperbolic metric. Moreover, the group of proper isometries of $\mathbb{H}^2$ is the quotient
group $\displaystyle {\rm SL}(2,\mathbb{R})/\{\pm I \}:={\rm PSL}(2,\mathbb{R})$. The quotient is taken to account for the  fact that $\Psi(g,\cdot)=\Psi(-g,\cdot)$. 

The  Lie algebra of ${\rm SL}(2,\mathbb{R})$ is the 3-dimensional real linear space 
\[ {\mathfrak sl}(2,\mathbb{R}) = \{ \xi \in {\rm M}(2,\mathbb{R}) \, | \, \,  {\rm trace\  \! \xi}=0  \}.  \]
Hence, any non-zero
element in $\slt$ is of one of the following three types:

\begin{enumerate}
\item Elliptic. These elements have two complex conjugate, purely imaginary eigenvalues.
\item Hyperbolic. They posses two real eigenvalues with the same absolute value and opposite signs.
\item Parabolic. They have a multiplicity two zero eigenvalue  (and are not diagonalizable). 
\end{enumerate}

The following are canonical representatives of elements of the above types. 
\begin{equation*}
\xi_e:=\left ( \begin{array}{cc} 0 & -\frac{1}{2} \\ \frac{1}{2} & 0 \end{array} \right ), \qquad
\xi_h:=\left ( \begin{array}{cc} \frac{1}{2} &0 \\ 0 & -\frac{1}{2} \end{array} \right ), \qquad \xi_p^{\pm}:=\left ( \begin{array}{cc} 0 &\pm 1 \\ 0 & 0 \end{array} \right ).
\end{equation*}

An arbitrary elliptic (respectively, hyperbolic) element
in $\slt$ is equivalent to $\omega \xi_e$ (respectively, $\omega \xi_h$) for a certain $\omega >0$   by ${\rm SL}(2,\R)$-conjugation.
Parabolic elements are either equivalent to $\xi_p^+$ or $\xi_p^-$ by ${\rm SL}(2,\R)$-conjugation.

Canonical representatives of the orbits are
$\omega \xi_e, \, \omega \xi_h, \xi_p^+$ and $\xi_p^-$, where $\omega$ is a positive real parameter.

The search for relative equilibria in the negative curved $n$--body problem is equivalent to find  a configuration $q_0 \in \mathbb{H}^{2n}$ such that 
\begin{equation*}
\Psi (\exp(\xi_a t),q_0) 
\end{equation*}
is a solution of the equations of motion, where the action is acting coordinate to coordinate for $a=e,h,p$.

\section{Hyperbolic Relative Equilibria}
According with Section \ref{RE} we can have essentially three different kinds of relative equilibria on surfaces of constant negative curvature. It is known that the parabolic relative equilibria do not exist \cite{Diacu}. The elliptic relative equilibria has been studied by several authors, as we have mentioned in the introduction, but very few is known about the hyperbolic relative equilibria. In the literature appears only some results for collinear solutions (the three particles are on the same geodesic) see for instance \cite{Diacu}, \cite{juan} and  . In \cite{naranjo} the authors have done a complete analysis of the relative equilibria and their stability but just for the case $n=2$.

In this paper we are interested in the hyperbolic relative equilibria, that is, in motions generated by the hyperbolic transformations. Our main goal is to prove the non-existence of relative equilibria with regular $n$--gon shape.

\begin{definition}
A hyperbolic circle with center $q_0$ and radius $\rho$ is the set of points on $\mathcal{H}^2$ whose hyperbolic distance to $q_0$ is $\rho$.
\end{definition}

\begin{definition}
We say that $n$ points $\{ a_i \}$ in $\mathcal{H}^2$ form a regular $n$--gon if they are on a hyperbolic circle and all hyperbolic distances $a_ia_{i+1}$ are equal for $i=1,,\dots, n$, where $a_{n+1}=a_1$.
\end{definition}

\begin{theorem}
In the $n$--body problem on $\mathcal{H}^2$ with equal masses do not exist hyperbolic relative equilibria with a regular $n$--gon configuration
\end{theorem}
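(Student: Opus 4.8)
The plan is to work entirely in the Weierstrass model $\mathbb{L}^2\subset\mathbb{R}^{2,1}$, where the hyperbolic isometries and the equations of motion (\ref{eq:motionL2}) are almost linear, and to turn the rotational symmetry of a regular $n$--gon against the shape of a hyperbolic orbit. First I would write down the relative‑equilibrium equations. Assume $n\ge 3$ equal masses sit at the vertices $q_1^0,\dots,q_n^0$ of a regular $n$--gon and form a hyperbolic relative equilibrium, so $q_i(t)=\exp(tA)q_i^0$ solves (\ref{eq:motionL2}) for a hyperbolic generator $A=\omega\,\xi$ ($\omega>0$) whose axis is a geodesic $\gamma=\{x\in\mathbb{L}^2:\ x\odot N=0\}$, with $N$ a spacelike unit normal, $N\odot N=1$. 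Since $\exp(tA)$ is an isometry of $(\mathbb{L}^2,\odot)$ every product $q_i^0\odot q_j^0$ is constant in time, and equivariance of (\ref{eq:motionL2}) collapses the motion to the instant $t=0$:
\[ A^2q_i^0-(Aq_i^0\odot Aq_i^0)\,q_i^0=\sum_{j\ne i}\frac{q_j^0+(q_i^0\odot q_j^0)q_i^0}{\big((q_i^0\odot q_j^0)^2-1\big)^{3/2}}=:F(q_i^0), \]
where, the masses being equal, $F$ depends only on the position and on the set of all positions, not on the labeling. A short computation with the canonical hyperbolic generator (for which $\xi^2=\mathrm{diag}(0,1,1)$) gives, for every $p\in\mathbb{L}^2$,
\[ A^2p-(Ap\odot Ap)\,p=-\omega^2\big[(p\odot N)^2\,p+(p\odot N)\,N\big], \]
so that being a hyperbolic relative equilibrium with axis $\gamma$ is equivalent to $-\omega^2[(p\odot N)^2p+(p\odot N)N]=F(p)$ for every vertex $p$.

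Next I would extract a second axis from the symmetry. A regular $n$--gon (with distinct vertices) is invariant under the elliptic rotation $R$ of angle $2\pi/n$ about the center $q_0$ of its circumscribed circle, which cyclically permutes the equally spaced vertices and hence fixes the vertex set. Applying the isometry $R$ to the solution $\{q_i(t)\}$ produces another solution of (\ref{eq:motionL2}), namely $\{\exp(tRAR^{-1})(Rq_i^0)\}$, which is again a hyperbolic relative equilibrium, with generator $RAR^{-1}$ (same $\omega$), axis $\gamma'=R\gamma$ and spacelike unit normal $N'=RN$; since at $t=0$ it coincides with the original vertex set of equal masses, the criterion above must also hold with $N$ replaced by $N'$. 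Crucially $\gamma'\ne\gamma$: a rotation by $2\pi/n$ about a point fixes a geodesic only for angles $0$ or $\pi$, which is excluded for $n\ge3$; equivalently $RN\ne\pm N$, so $N$ and $N'$ are linearly independent. Subtracting the two relative‑equilibrium equations and putting $u=p\odot N$, $u'=p\odot N'$ leaves, for every vertex $p$,
\[ (u^2-u'^2)\,p=u'N'-uN. \]

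Finally I would read off the contradiction. Set $W=\mathrm{span}(N,N')$, a $2$--plane; the left side lies in $\mathrm{span}(p)$ and the right side in $W$. If $p\notin W$ then both sides vanish and, by independence of $N,N'$, $u=u'=0$, i.e. $p\in W^{\perp}$ (Lorentz‑orthogonal complement). Splitting by the causal type of $W$: if $W$ is spacelike, $W^{\perp}$ is a timelike line meeting $\mathbb{L}^2$ in a single point while $W\cap\mathbb{L}^2=\emptyset$, so all vertices would be that one point and $n\le1$; if $W$ is degenerate, $W^{\perp}$ is a null line disjoint from $\mathbb{L}^2$ and there is no admissible vertex; and if $W$ is timelike, $W^{\perp}$ is spacelike and disjoint from $\mathbb{L}^2$, forcing every vertex into $W$, hence onto the single geodesic $W\cap\mathbb{L}^2$. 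In the last case the vertices are collinear and lie on a hyperbolic circle, which a geodesic meets in at most two points, so $n\le2$. Every case contradicts $n\ge3$, proving the theorem. (For $n=2$ the rotation angle is $\pi$, $\gamma'=\gamma$ is possible, and the argument correctly does not rule out the known two‑body hyperbolic relative equilibria.)

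I expect the middle step to carry the weight: one must be sure that applying an isometry which need \emph{not} commute with the hyperbolic flow still yields a genuine relative equilibrium supported on the same vertex set — here equality of the masses is indispensable, since it makes the relabeling of vertices harmless — and one must correctly identify the new axis as $R\gamma$ and verify $R\gamma\ne\gamma$. By contrast, the closed form for $A^2p-(Ap\odot Ap)p$ and the spacelike/timelike/degenerate trichotomy for $W$ are routine linear algebra in $\mathbb{R}^{2,1}$.
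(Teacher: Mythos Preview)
Your proof is correct and follows a genuinely different route from the paper's. The paper argues by direct coordinate computation: it places the regular $n$--gon with centre at $(0,0,1)$ and one vertex on the geodesic $x=0$, applies the canonical hyperbolic flow, and then isolates the $z$--component of the equation of motion for the first vertex; this forces the sum $\sum_{j\ge 2}\dfrac{z_j+(q_1\odot q_j)z_1}{[(q_1\odot q_j)^2-1]^{3/2}}$ to vanish, while every summand is shown to be strictly negative for $t>0$. Your argument is instead structural and coordinate--free: you exploit the $2\pi/n$ rotational symmetry $R$ of the polygon to manufacture a \emph{second} relative--equilibrium equation supported on the same vertex set but with axis $R\gamma$, subtract the two, and reduce the question to a causal--type trichotomy for the plane $\mathrm{span}(N,RN)$ in $\mathbb{R}^{2,1}$ that forces all vertices onto a single geodesic, whence $n\le 2$. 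The paper's method is shorter and entirely elementary (one sign check), but it silently pins down the mutual position of the polygon and the hyperbolic axis. Your method costs a little more linear algebra but is robust with respect to that positioning, makes transparent exactly where the equal--mass hypothesis enters (it is precisely what makes the relabeling under $R$ harmless), and explains conceptually why the obstruction evaporates at $n=2$ (there $R$ is a half--turn and can fix $\gamma$).
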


\begin{proof}
For the proof of this theorem we will use the Weierstrass model $\mathbb{L}^2$.
Consider an initial configuration $q(0)=(q_1(0),\cdots, q_n(0)) \in (\mathbb{L}^2)^n$ with a regular 
$n$--gon shape $\Delta$. For any regular $n$--gon there exist an isometry that transforms the center 
of the polygon $\Delta$ into the point $(0,0,1)$,  and the hyperbolic geodesic that connects $a_1$ with the center of $\Delta$ into the geodesic $y^2-z^2=-1$ $(x=0)$.

This new initial configuration in the same class that the original one can be written as

\begin{equation}
\begin{split}
q_i(0)= & \left(r\cos \left(\dfrac{2 \pi(i-1)}{n}+ \dfrac{\pi}{2}\right),r \sin\left(\dfrac{2 \pi(i-1)}{n}+ \dfrac{\pi}{2}\right),z \right) \\
& \left(-r\sin \left(\dfrac{2 \pi(i-1)}{n}\right),r \cos\left(\dfrac{2 \pi(i-1)}{n}\right),z \right),\quad  i=1,\cdots n,
\end{split}
\end{equation}
with $r$ and $z$ are fixed and satisfy $r^2-z^2=-1$.

By section \ref{RE}, we know that the hyperbolic transformations are given by the Lorentz transformations 

\begin{equation}
A(s)=\begin{bmatrix}
1 &   0 & 0  \\[0.3em]
0  & \cosh(s) & \sinh(s) \\[0.3em]
0           & \sinh(s)           & \cosh(s)
\end{bmatrix}.
\end{equation}

The hyperbolic relative equilibria  correspond to a Lorentz transformation with a particular ``angular velocity'' $\omega\neq 0$ applied to a given initial configuration \cite{naranjo}. Hence if we apply  $A(\omega t)$ to each point  $q_i(0)$ we get

\begin{equation}
q_i(0) \rightarrow  q_i(t)= \begin{bmatrix} -r\sin \left(\dfrac{2 \pi(i-1)}{n}\right) \\ r\cosh(\omega t)  \cos\left(\dfrac{2 \pi(i-1)}{n}\right)  + z \sinh(\omega t) \\  r\sinh(\omega t)  \cos\left(\dfrac{2 \pi(i-1)}{n}\right)  + z \cosh(\omega t)\end{bmatrix}.
\end{equation}

Consider the following expressions
\begin{equation*}
(\dot{q}_i \odot \dot{q}_i)=-\omega^2 \left( \cos^2\left( \dfrac{2\pi(i-1)}{n} \right)r^2-z^2  \right),
\end{equation*} 
and
\begin{equation*} \ddot{z}_i=\omega^2 z_i. \end{equation*}
We have
\begin{equation}
\ddot{z_1}=\sum_{j=2}^{n}\dfrac{m_j[z_j+(q_1 \odot q_j)z_1]}{[-1+(q_1 \odot q_j)^2]^{3/2}}+(\dot{q}_1 \odot \dot{q}_1)z_1.
\label{movz}
\end{equation}
Now, using the particular case
$$(\dot{q}_1 \odot \dot{q}_1)=-\omega^2 \left(r^2-z^2  \right), \quad {\rm and} \quad 
\ddot{z}_1=\omega^2 z_1 .$$
We obtain  
$$(\dot{q}_1 \odot \dot{q}_1)z_1-\ddot{z}_1=-\omega^2(r^2-z^2+1)z_1=0.$$

From this fact and equation (\ref{movz}), we  get

\begin{equation}
\sum_{j=2}^{n}\dfrac{m_j[z_j+(q_1 \odot q_j)z_1]}{[-1+(q_1 \odot q_j)^2]^{3/2}}=0. \label{zzero}
\end{equation}

Notice that

$$q_i \odot q_j=\cos^2\left( \dfrac{2\pi(i-j)}{n} \right)r^2-z^2 \leq r^2-z^2=-1.$$

Then 
$$z_j+(q_1 \odot q_j)z_1< z_j-z_1, \ \ \ j \neq 1. $$

If we consider $\omega>0$ and $t>0$, then we have
\begin{eqnarray*}
z_1 &=& r\sinh(\omega t)  + z \cosh(\omega t) \\
    &>& r\sinh(\omega t)  \cos\left(\dfrac{2 \pi(j-1)}{n}\right)  + z \cosh(\omega t)\\
    &=& z_j,  \quad \forall \quad j \neq 1.
\end{eqnarray*}

All the above  imply that  
$$z_j+(q_1 \odot q_j)z_1< z_j-z_1<0, \ \ \ \forall j \neq 1.$$

This fact indicates that equation $(\ref{zzero})$ is never satisfied.

If we consider $\omega<0$, then we obtain the same result by considering $t<0$.

Therefore hyperbolic relative equilibria do not exist with a regular $n$--gon shape.
\end{proof}

As an easy consequence of the above theorem we obtain the following result for the three body problem
concerning to the non-existence of hyperbolic Lagrangian relative equilibria (the hyperbolic distance among the three masses is the same). Actually we prove first this result and then we discover that it can be extended to the $n$--body problem with equal masses.

\begin{cor}
There are not Lagrangian hyperbolic relative equilibria
\end{cor}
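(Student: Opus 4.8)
The statement concerns the three–body problem, since a Lagrangian configuration is by definition one in which the three mutual distances $d(q_1,q_2)$, $d(q_2,q_3)$, $d(q_1,q_3)$ coincide, i.e. an equilateral triangle on $\mathcal{H}^2$. The plan is to recognise such a configuration as a regular $3$–gon and then invoke the Theorem. First I would establish the geometric fact that an equilateral triangle on $\mathcal{H}^2$ always lies on a hyperbolic circle: the isometry cyclically permuting the three (pairwise equidistant) vertices is a nontrivial isometry of order three, hence orientation preserving and elliptic, so it has a unique fixed point $c\in\mathcal{H}^2$; since the rotation permutes the vertices and fixes $c$, the point $c$ is equidistant from $a_1,a_2,a_3$, which therefore lie on the hyperbolic circle centred at $c$. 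As the three sides are equal as well, the configuration is a regular $3$–gon in the sense of the second definition of Section 3.

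It then remains to pass from the Theorem (stated for equal masses) to the arbitrary–mass situation of the Corollary. Here I would observe that the proof of the Theorem, once it is reduced to the scalar identity (\ref{zzero}), uses only the \emph{positivity} of the masses and never their equality: for a suitable sign of $\omega t$ each summand $m_j[z_j+(q_1\odot q_j)z_1]\,[-1+(q_1\odot q_j)^2]^{-3/2}$ is strictly negative, so the sum cannot vanish for any positive values $m_2,\dots,m_n$. Consequently the non-existence conclusion holds verbatim for a regular $n$–gon with arbitrary positive masses, and applying it with $n=3$ to the equilateral configuration above yields the Corollary. (Alternatively, one could first show by a symmetry argument on the full system (\ref{eq:motionL2}) that a Lagrangian hyperbolic relative equilibrium forces $m_1=m_2=m_3$ and then quote the Theorem as stated; but the route through (\ref{zzero}) is shorter and sidesteps that computation.)

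The only genuinely non-routine point I anticipate is the circumscribed-circle claim: in contrast with the Euclidean plane, three points of $\mathcal{H}^2$ need not be concyclic, so one cannot simply invoke ``three points determine a circle.'' The order-three symmetry argument above repairs this, but it does rely on the fact that a finite-order orientation-preserving isometry of $\mathcal{H}^2$ is elliptic and therefore fixes a point of $\mathcal{H}^2$ — this is where the negative-curvature geometry genuinely enters. Everything after that is a direct application of the Theorem.
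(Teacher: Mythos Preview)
Your proposal is correct and follows the paper's intended route: the Corollary is stated there without any argument, as the $n=3$ instance of the Theorem. You supply two details the paper leaves implicit. First, you verify that an equilateral triple is a regular $3$--gon in the sense of the paper's Definition~2 by exhibiting a circumscribed hyperbolic circle via the order--three rotational symmetry; the paper does not comment on this point. Second, you note that the contradiction obtained from equation~(\ref{zzero}) uses only the positivity of the $m_j$, so the non-existence extends to arbitrary positive masses; the paper's Theorem is stated only for equal masses and the Corollary, read as a direct specialisation, inherits that hypothesis. Your observation is valid and yields a slightly stronger statement than the paper actually claims, but if the goal is merely to reproduce the paper's Corollary, the equal-mass case together with the circumcircle remark already suffices.
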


\section{A new class of hyperbolic collinear relative equilibria for the $5$--body problem}

Hyperbolic collinear solutions for the three body problem on spaces of constant negative curvature, when the two masses at the end of the same geodesic are equal and the mass in the middle is arbitrary where found by Diacu, P\'erez-Chavela and Santoprete in \cite{Diacu}, hence, a natural question is whether or not these kind of solutions exist on the case for more bodies. In this section we prove the following result for the $5$--body problem.

\begin{theorem}\label{collinear}
In the 5-body problem on $\mathcal{H}^2$ we consider $5$--particles on the same geodesic with masses
$m_1=m_2= m, m_3=M$ and $m_4=m_5=\mu$, such that the couple of equal masses are equidistant with $M$. Then, there exist positions and masses such that they lead to collinear hyperbolic relative equilibria.
\end{theorem}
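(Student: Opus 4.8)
The plan is to realize the relative equilibrium in the Weierstrass model $\mathbb{L}^2$ and to use the prescribed symmetry to reduce the equations of motion to a small algebraic system. A hyperbolic relative equilibrium is a motion $q_i(t)=A(\omega t)q_i(0)$ with $A(s)$ the boost of Section \ref{RE} and $\omega\neq 0$. The first task is to identify the geodesic on which the five bodies can lie. Substituting such a motion into (\ref{eq:motionL2}), a short computation gives $\ddot q_i-(\dot q_i\odot\dot q_i)q_i=-\omega^2 x_i\big(x_i\,q_i(t)+e_1\big)$, where $e_1=(1,0,0)$ is the direction fixed by $A$ and $x_i$ is the first coordinate of $q_i(0)$; the attraction term $\sum_{j\neq i}m_j\big(q_j+(q_i\odot q_j)q_i\big)\big((q_i\odot q_j)^2-1\big)^{-3/2}$ of (\ref{eq:motionL2}), on the other hand, is tangent at $q_i(t)$ to the geodesic carrying the configuration. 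Equating these expressions forces them to be parallel, which is possible only if $e_1$ lies in the plane of that geodesic; equivalently, the bodies must sit on a geodesic meeting the axis of $A$ orthogonally — in particular not on the axis itself, where the force on an outermost body cannot cancel. After an isometry we may take this geodesic to be $\{y=0\}\cap\mathbb{L}^2$ and write $q_i(0)=(\sinh v_i,0,\cosh v_i)$.

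Next I would impose the symmetric ansatz coming from the hypotheses: $v_3=0$ for the mass $M$, $v_1=-v_2=\beta$ for the pair of masses $m$, and $v_4=-v_5=\alpha$ for the pair $\mu$, say with $0<\alpha<\beta$. Using that $q_i(t)\odot q_j(t)=-\cosh(v_i-v_j)$ is constant along the motion, together with the identity $q_j(t)+(q_i(t)\odot q_j(t))\,q_i(t)=\sinh(v_j-v_i)\,\sigma_i(t)$, where $\sigma_i(t)$ is a fixed unit vector tangent at $q_i(t)$ to the moving geodesic, each vector equation in (\ref{eq:motionL2}) collapses to the single scalar equation
\[
\sum_{j\neq i}\frac{m_j\,\sign(v_j-v_i)}{\sinh^2(v_i-v_j)}=-\,\omega^2\sinh v_i\cosh v_i,\qquad i=1,\dots,5 .
\]
The reflection $v\mapsto-v$ then makes the $i=3$ equation trivial ($0=0$), identifies the $i=2$ equation with the $i=1$ one and the $i=5$ with the $i=4$ one, so that only two equations survive; call them $E_1$ and $E_4$.

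It remains to solve $E_1$ and $E_4$. Equation $E_1$ expresses $\omega^2$ as a strictly positive combination of $\sinh^{-2}$ terms in the masses and in $\alpha,\beta$, so $\omega^2>0$ and $\omega\neq 0$ come for free. Substituting that expression for $\omega^2$ into $E_4$ leaves a single \emph{linear} relation $A\,m+B\,M+C\,\mu=0$ whose coefficients $A,B,C$ depend only on $\alpha,\beta$. Using $0<\alpha<\beta$ one checks that $A>0$ (because $\sinh(\beta-\alpha)<\sinh(\beta+\alpha)$) and $B<0$ (because $x\mapsto\sinh^3 x\cosh x$ is increasing on $(0,\infty)$), while the sign of $C$ is irrelevant. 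Since $A>0$ and $B<0$, the relation $A m+B M+C\mu=0$ always admits a solution with $m,M,\mu$ all positive: solve for $M$ when $C\ge 0$, and for $m$ when $C<0$. Letting $(\alpha,\beta)$ range over $0<\alpha<\beta$ then produces a whole family of admissible positions and masses, each generating a collinear hyperbolic relative equilibrium, which is the assertion of Theorem \ref{collinear}.

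The step I expect to be the main obstacle is the reduction carried out in the first two paragraphs: pinning down that the bodies must lie on a geodesic orthogonal to the axis of the boost (rather than on the axis), and then verifying that the vector system (\ref{eq:motionL2}) genuinely collapses to the displayed scalar system — the key structural point being that the mutual attractions on a given body, all directed along the common geodesic, are parallel to the single vector $\sigma_i(t)$. After that, only an elementary sign analysis of $A$, $B$, $C$ remains.
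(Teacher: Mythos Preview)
Your argument is correct, and it proceeds along a genuinely different line from the paper's. The paper works in the Poincar\'e upper half plane, places the five bodies on the unit semicircle with angular parameters $\alpha$ and $\alpha+\beta$, writes out the two surviving $\omega^2$--equations directly from (\ref{eq:motionH2}), and obtains a relation $f_1(\alpha,\beta)M+f_2(\alpha,\beta)m+f_3(\alpha,\beta)\mu=0$. There the sign of $f_1$ is easy, but the sign of $f_2$ is only established on a subregion of the $(\alpha,\beta)$--domain, via a cubic root analysis along the boundary $\beta=\tfrac{\pi}{2}-\alpha$ and a continuity argument; this already suffices for existence. Your Weierstrass-model reduction, by contrast, produces the transparent scalar system $\sum_{j\neq i}m_j\,\sign(v_j-v_i)\sinh^{-2}(v_i-v_j)=-\omega^2\sinh v_i\cosh v_i$, from which the monotonicity facts $\sinh(\beta-\alpha)<\sinh(\beta+\alpha)$ and the increase of $x\mapsto\sinh^3x\cosh x$ give $A>0$ and $B<0$ for \emph{every} $0<\alpha<\beta$. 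So you obtain more than the paper: for every admissible pair of positions there exist positive masses giving a relative equilibrium, whereas the paper only secures this in a corner of the parameter domain. Your preliminary paragraph --- showing that the carrying geodesic must lie in a plane through $e_1$, hence be orthogonal to the boost axis, and ruling out the axis itself by the outermost--body argument --- is also additional content; the paper simply posits the unit semicircle without this justification. The only caution is notational: your $(\alpha,\beta)$ are hyperbolic arclengths, not the Euclidean angles the paper uses, so the two parameter domains are different charts of the same configuration space.
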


\begin{proof}
For the proof of this theorem we will use as a model for $\mathcal{H}^2$
the Poincar\'e upper half plane  of the hyperbolic geometry $\mathbb{H}^2$.

Consider five particles with an initial position on the same geodesic. It is possible to consider this geodesic as the canonical one, that is as the half circle of  Euclidean radius one with center at the point $(0,0)$ (see \cite{naranjo} for more details).

We assume that the initial positions of these particles are	 $q_i(0)=(\cos(\theta_i), \sin(\theta_i))$. The transformation $\Psi$ which defines ``hyperbolic rotations" (in this case it correspond to homothetic motions) is given by $\Psi(q_i)=e^{\omega t}q_i$.

As we have seen in Section \ref{RE}, in order to obtain hyperbolic relative equilibria, we must guarantee the existence of a constant $\omega$ such that $q_j(t)=e^{\omega t}q_j(0)$ for 
$j=1,\cdots,n$ is a solution of the equations of motion (\ref{eq:motionH2}).

We consider the following initial positions 

\begin{equation}
\begin{split}
q_1(0)=&\left(\cos\left(\dfrac{\pi}{2}-\alpha\right),\sin\left(\dfrac{\pi}{2}-\alpha\right)\right)\\
q_2(0)=&\left(-\cos\left(\dfrac{\pi}{2}-\alpha\right),\sin\left(\dfrac{\pi}{2}-\alpha\right)\right)\\
q_3(0)=&(0,1)\\
q_4(0)=&\left(\cos\left(\dfrac{\pi}{2}-\alpha-\beta\right),\sin\left(\dfrac{\pi}{2}-\alpha-\beta\right)\right)\\
q_5(0)=&\left(-\cos\left(\dfrac{\pi}{2}-\alpha-\beta\right),\sin\left(\dfrac{\pi}{2}-\alpha-\beta\right)\right),
\end{split}
\end{equation}
for the particles with masses $m_1=m_2=\mu, \ \ m_3=M, \ \ m_4=m_5=m$.

For any $\alpha \in \left(0, \dfrac{\pi}{2}\right)$ we will show that there exist $\beta \in \left(0, \dfrac{\pi}{2}\right)$ with $\alpha + \beta<\dfrac{\pi}{2}$ and values of $m$ and $M$ such that they generate hyperbolic relative equilibria.

Substituting the curves $q_i(t), \ \ i=1,\cdots 5$ into the equations of motion (\ref{eq:motionH2}) we obtain the following system

\begin{equation}
\begin{split}
\omega_1^2=&-\,\frac { \left( \cos \left( \alpha \right)  \right) ^{4}}{\sin \left( 
\alpha \right) } \left( -2\mu\,{\frac {\sin \left( \alpha \right)  \left( \cos
 \left( \alpha \right)  \right) ^{2}}{ \left( 1+ \left( \sin \left( \alpha
 \right)  \right) ^{2} \right) ^{3}}}-M\sin \left( \alpha \right)+\right.\\ 
 &\left.{\frac {m \left( \cos \left( \alpha+\beta \right)  \right) ^{2} \left( 
\sin \left( \alpha+\beta \right) -\sin \left( \alpha \right)  \right) }{\, \left( 
1-\sin \left( \alpha \right) \sin \left( \alpha+\beta \right)  \right) ^{3}}}-{
\frac {m \left( \cos \left( \alpha+\beta \right)  \right) ^{2} \left( \sin
 \left( \alpha+\beta \right) +\sin \left( \alpha \right)  \right) }{\, \left( \sin
 \left( \alpha \right) \sin \left( \alpha+\beta \right) +1 \right) ^{3}}} \right)\label{w1},
 \end{split}
 \end{equation}
 \begin{equation}
\begin{split}
 \omega_2^2=&-\,\frac { \left( \cos \left( \alpha+\beta \right)  \right) ^{4}}{\sin
 \left( \alpha+\beta \right) } \left( -\mu{\frac { \left( \cos \left( a \right) 
 \right) ^{2} \left( \sin \left( \alpha+\beta \right) -\sin \left( \alpha \right) 
 \right) }{\, \left( 1-\sin \left( \alpha \right) \sin \left( \alpha+\beta
 \right)  \right) ^{3}}}\right.  \\
 &\left.-\mu{\frac { \left( \cos \left( \alpha \right) 
 \right) ^{2} \left( \sin \left( \alpha+\beta \right) +\sin \left( \alpha \right) 
 \right) }{\, \left( \sin \left( \alpha \right) \sin \left( \alpha+\beta \right) +
1 \right) ^{3}}}-M\sin \left( \alpha+\beta \right)-2\,{\frac 
{m \left( \cos \left( \alpha+\beta \right)  \right) ^{2}\sin \left( \alpha+\beta
 \right) }{ \left(  \left( \sin \left( \alpha+\beta \right)  \right) ^{2}+1
 \right) ^{3}}} \right) . \label{w2}
\end{split}
\end{equation}

From equations (\ref{w1}) and (\ref{w2}) we get

\begin{equation}
\begin{split}
\omega_1^2-\omega_2^2 =&  2 \left( \dfrac{\mu \cos^6(\alpha)}{(1+\sin^2(\alpha))^3}-\dfrac{m\cos^6(\alpha+\beta)}{(1+\sin^2(\alpha+\beta))^3} \right)+M\left(\cos^4(\alpha)-\cos^4(\alpha+\beta)\right)\\
&-\dfrac{\cos^2(\alpha+\beta)\cos^2(\alpha)(\sin(\alpha+\beta)-\sin(\alpha))}{(1-\sin(\alpha)\sin(\alpha-\beta))^3}\left( \dfrac{m\cos^2(\alpha)}{\sin(\alpha)}+\dfrac{\mu \cos^2(\alpha+\beta)}{\sin(\alpha+\beta)} \right)\\
&+\dfrac{\cos^2(\alpha+\beta)\cos^2(\alpha)(\sin(\alpha+\beta)+\sin(\alpha))}{(1+\sin(\alpha)\sin(\alpha-\beta))^3}\left( \dfrac{m\cos^2(\alpha)}{\sin(\alpha)}-\dfrac{\mu \cos^2(\alpha+\beta)}{\sin(\alpha+\beta)} \right)\\
&=M\left(\cos^4(\alpha)-\cos^4(\alpha+\beta)\right)\\
&+m \left(   -\dfrac{\cos^2(\alpha+\beta)\cos^2(\alpha)(\sin(\alpha+\beta)-\sin(\alpha))}{(1-\sin(\alpha)\sin(\alpha-\beta))^3} \dfrac{\cos^2(\alpha)}{\sin(\alpha)} \right.\\
&\left. +\dfrac{\cos^2(\alpha+\beta)\cos^2(\alpha)(\sin(\alpha+\beta)+\sin(\alpha))}{(1+\sin(\alpha)\sin(\alpha-\beta))^3}\dfrac{\cos^2(\alpha)}{\sin(\alpha)}  \right.\\
&\left. -\dfrac{2\cos^6(\alpha+\beta)}{(1+\sin^2(\alpha+\beta))^3}  \right)\\
&+\mu\left(-\dfrac{\cos^2(\alpha+\beta)\cos^2(\alpha)(\sin(\alpha+\beta)-\sin(\alpha))}{(1-\sin(\alpha)\sin(\alpha-\beta))^3}\dfrac{\cos^2(\alpha+\beta)}{\sin(\alpha+\beta)}\right.\\
&-\dfrac{\cos^2(\alpha+\beta)\cos^2(\alpha)(\sin(\alpha+\beta)+\sin(\alpha))}{(1+\sin(\alpha)\sin(\alpha-\beta))^3}\dfrac{\cos^2(\alpha+\beta)}{\sin(\alpha+\beta)}\\
&\left.+  \dfrac{2\cos^6(\alpha)}{(1+\sin^2(\alpha))^3}\right) . \label{h}
\end{split}
\end{equation}

The above equation can be seen as

\begin{equation}
\omega_1^2-\omega_2^2=f_1(\alpha,\beta)M+f_2(\alpha,\beta)m+f_3(\alpha,\beta)\mu. \label{eq:resta}
\end{equation}

We will see that for some $\alpha, \beta$ with $0< \alpha < \alpha + \beta<\dfrac{\pi}{2}$, there exist positive values for the masses such that the right side of (\ref{eq:resta}) is zero. In other words, we will see that there are $\alpha,\beta, m, M$ and $\mu$ such that  $\omega_1^2=\omega_2^2$. These are the values of $\omega = |\omega_1|=|\omega_2|$ that we are looking for get a relative equilibrium.

It is easy to verify that $f_1>0$, since $\alpha \in (0,\frac{\pi}{2})$ and $\beta \in (0,\frac{\pi}{2}-\alpha)$.

Consider the line $\beta=\frac{\pi}{2}-\alpha$, then

\[ f_2(\alpha,\frac{\pi}{2}-\alpha)=\frac{P}{Q},
 \]
where

\[ P=-(4\, \left( \cos \left( a \right)  \right) ^{6}-4\, \left( \cos \left( 
a \right)  \right) ^{4}-3\, \left( \cos \left( a \right)  \right) ^{2}
+1
)
 \]
 
 \[ Q=\left( 4\, \left( \cos \left( a \right)  \right) ^{4}-8\, \left( \cos
 \left( a \right)  \right) ^{2}+5 \right) ^{3}.
 \]
 
 Let us see that $P$ has a root in the interval $(0,\frac{\pi}{2})$. To verify this consider $x=\cos^2(\alpha)$, then the polynomial $P$ transforms to $\bar{P}=-(4x^3-4x^2-3x+1)$. This polynomial has two critical points $x_{1,2}=\frac{1}{3}\pm\frac{1}{6}\sqrt{13}$, one positive corresponding to a maximum and one negative corresponding to a minimum. Since 
 $\lim_{x \to \infty} = - \infty$ and $\bar{P}(1)=2$ we conclude that $\bar{P}$ has only one root $x_0 \in (0,1)$. This implies that there exist only one value $\alpha_0 \in (0,\frac{\pi}{2})$ such that $P(\alpha_0)=0$
 
 Consider $P(0)=2$, $P(\frac{\pi}{2})=-1$, then there exists only one value $\alpha_1 \in (0,\frac{\pi}{2})$ such that $P(\alpha_1)=0$. For $\alpha \in (\alpha_1,\frac{\pi}{2})$ we have $f_2(\alpha,\frac{\pi}{2}-\alpha)<0$. By continuity of the function $f_2(\alpha,\beta)$, there exists values of $\alpha \in (0, \frac{\pi}{2})$ and $\beta \in (0, \frac{\pi}{2}-\alpha)$ such that $f_2(\alpha,\beta)<0$. For those values of $\alpha$ and $\beta$ for which $f_2<0$, and since $f_1>0$ we can conclude that, independently of the value of $f_3$, it is possible to find values of $m$, $M$ and $\mu$ such that the right side of (\ref{eq:resta}) is zero. With this we conclude the proof of the theorem. \end{proof}

\begin{rem} We believe that if we add another couple of equal masses equidistant with the particle of mass $M$ (and so on), it will be possible  to obtain a similar result to Theorem \ref{collinear}. However the computations are much more complicated.
\end{rem}

\begin{rem}
Numerically we can see in Figure \ref{region} the region where $f_2<0$ for $\alpha \in (0,\frac{\pi}{2}), \beta \in (0,\frac{\pi}{2}-\alpha)$. Also numerically it is possible to check that $f_3>0$ for any $\alpha \in (0,\frac{\pi}{2}), \beta \in (0,\frac{\pi}{2}-\alpha)$.
\end{rem}
\begin{figure}[h]
\centering
{\includegraphics[width=.5\textwidth]{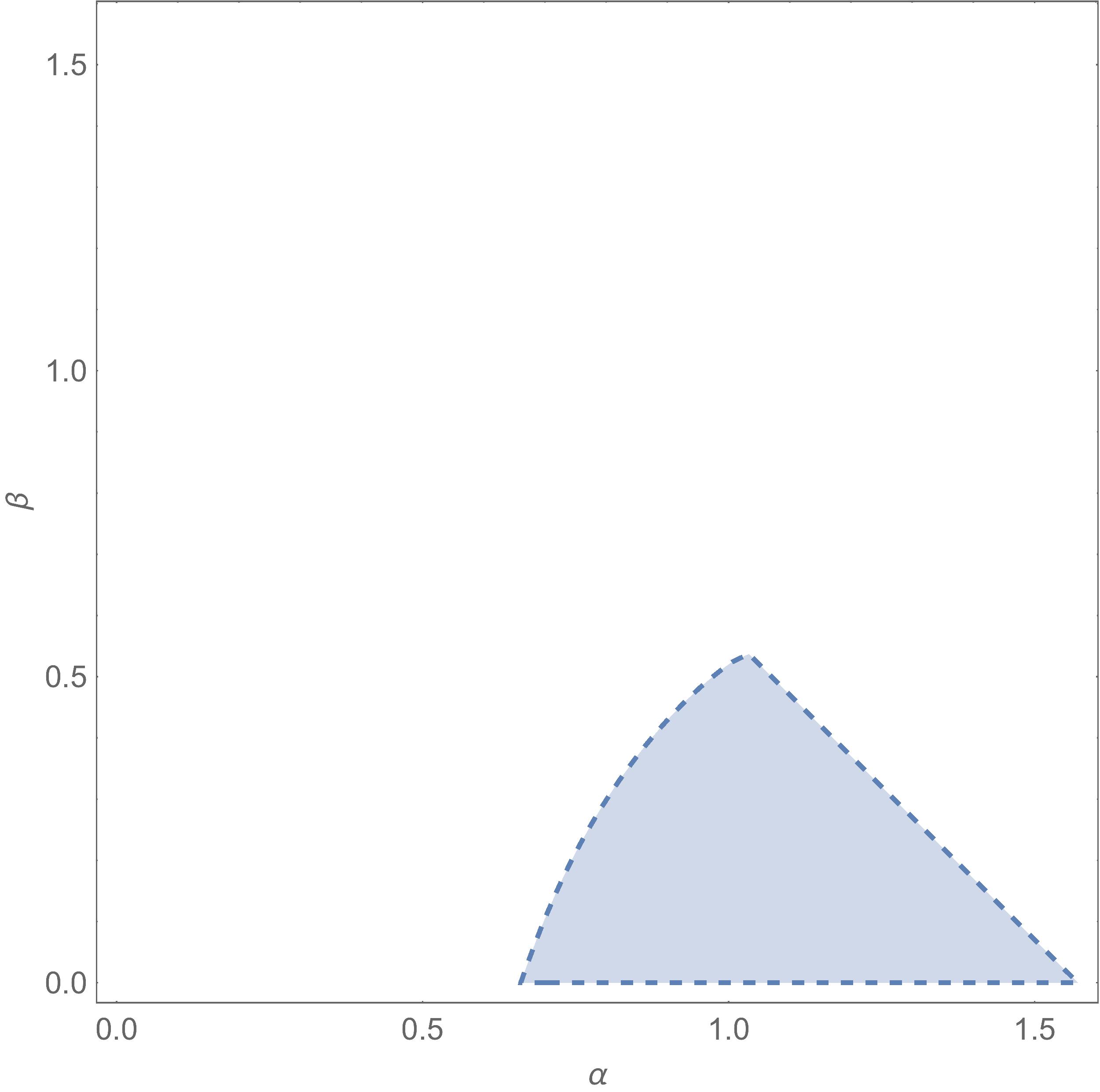}}
\caption{Region where $f_2(\alpha,\beta)<0$.}\label{region}
\end{figure}

\end{document}